\def\eqnarray{\stepcounter{equation}\let\@currentlabel=\theequation
\global\@eqnswtrue
\tabskip\@centering\let\\=\@eqncr
$$\halign to \displaywidth\bgroup\hfil\global\@eqcnt\z@
  $\displaystyle\tabskip\z@{##}$&\global\@eqcnt\@ne
  \hfil$\displaystyle{{}##{}}$\hfil
  &\global\@eqcnt\tw@ $\displaystyle{##}$\hfil
  \tabskip\@centering&\llap{##}\tabskip\z@\cr}
\def\endeqnarray{\@@eqncr\egroup
      \global\advance\c@equation\m@ne$$\global\@ignoretrue}
\newtheorem{theorem}{Theorem}[section]
\newtheorem{definition}[theorem]{Definition}
\newtheorem{remark}[theorem]{Remark}
\numberwithin{equation}{section}
\def\RR{{\mathbb{R}}}
\def\Om{\Omega}
\def\bOm{\overline{\Om}}
\newcommand{\norm}[2]{{\left\|#1\right\|}_{#2}}
\newcommand{\fl}[2]{(-\Delta)^#1 #2}
\title[Local regularity for fractional heat equations]{Local regularity for fractional heat equations}
\author{Umberto Biccari}
\address{Umberto Biccari, DeustoTech, University of Deusto, 48007 Bilbao, Basque Country, Spain.}
\address{Umberto Biccari, Facultad de Ingenier\'{\i}a, Universidad de Deusto, Avda Universidades 24, 48007 Bilbao, Basque Country, Spain.}
\email{umberto.biccari@deusto.es, u.biccari@gmail.com}
\author{Mahamadi Warma}
\address{Mahamadi Warma, University of Puerto Rico  (Rio Piedras Campus), College of Natural Sciences,
Department of Mathematics, PO Box 70377 San Juan PR
00936-8377 (USA). }
\email{mahamadi.warma1@upr.edu, mjwarma@gmail.com}
\author{Enrique Zuazua}
\address{Enrique Zuazua, DeustoTech, University of Deusto, 48007 Bilbao, Basque Country, Spain.}
\address{Enrique Zuazua, Facultad de Ingenier\'{\i}a, Universidad de Deusto, Avda Universidades 24, 48007 Bilbao, Basque Country, Spain.}
\address{Enrique Zuazua, Departamento de Matem\'aticas, Universidad Aut\'onoma de Madrid, Campus de Cantoblanco,
28049, Madrid, Spain}
\email{enrique.zuazua@deusto.es, enrique.zuazua@uam.es}
\thanks{The work of Umberto Biccari was partially supported by the Advanced Grant DYCON (Dynamic Control) of the European Research Council Executive Agency, by the MTM2014-52347 Grant of the MINECO (Spain) and by the Air Force Office of Scientific Research under the Award No: FA9550-15-1-0027. The work of Mahamadi Warma was partially supported by the Air Force Office of Scientific Research under the Award No: FA9550-15-1-0027. The work of Enrique Zuazua was partially supported by the Advanced Grant DYCON (Dynamic Control) of the European Research Council Executive Agency, FA9550-15-1-0027 of AFOSR, FA9550-14-1-0214 of the EOARD-AFOSR, the MTM2014-52347 Grant of the MINECO (Spain) and ICON of the French ANR}
\keywords{Fractional Laplacian, Heat equation, Dirichlet boundary condition, weak solutions, local regularity}
\subjclass[2010]{35B65, 35R11, 35S05}
\begin{document}

\begin{abstract}
We prove the maximal local regularity of weak solutions to the parabolic problem associated with the fractional Laplacian with homogeneous Dirichlet boundary conditions on an arbitrary bounded open set $\Omega\subset\RR^N$.  Proofs combine classical abstract regularity results for parabolic equations with some new local regularity results for the associated elliptic problems. 
\end{abstract}

\maketitle

\begin{center}
{\it Dedicated to Enrique Fern\'andez-Cara on his 60th birthday.}
\end{center}

\section{Introduction}\label{intro}

The aim of the present paper is to study the local regularity of weak solutions to the following parabolic problem
\begin{align}\label{DP}
	\begin{cases}
		u_t+\fl{s}{u}=f &\mbox{ in }\;\Omega\times(0,T)=:\Omega_T,
		\\
		u\equiv 0 &\mbox{ on }\;(\RR^N\setminus\Omega)\times(0,T),
		\\
		u(\cdot,0)\equiv 0 &\mbox{ in }\;\Omega,
	\end{cases}
\end{align}
where $\Omega\subset\RR^N$ is an arbitrary bounded open set, $f$ is a given distribution and, for all $s\in(0,1)$, $(-\Delta)^s$ denotes the fractional Laplace operator, which is defined as the following singular integral 
\begin{align}\label{fl}
(-\Delta)^su(x):=C_{N,s}\,\mbox{P.V.}\int_{\RR^N}\frac{u(x)-u(y)}{|x-y|^{N+2s}}\;dy,\;\;x\in\RR^N.
\end{align}
In \eqref{fl}, $C_{N,s}$ is a normalization constant given by
\begin{align*}
C_{N,s}:=\frac{s2^{2s}\Gamma\left(\frac{2s+N}{2}\right)}{\pi^{\frac
N2}\Gamma(1-s)},
\end{align*}
$\Gamma $ being the usual Gamma function. 

We are interested in analyzing the local regularity for solutions to the parabolic problem \eqref{DP}. 

We first introduce the functional setting. Given $\Omega\subset\RR^N$, an arbitrary open set, for $p\in (1,\infty )$ and $s\in (0,1)$, we denote by
\begin{equation*}
W^{s,p}(\Omega ):=\left\{ u\in L^{p}(\Omega):\;\int_{\Omega}\int_{\Omega }
\frac{|u(x)-u(y)|^{p}}{|x-y|^{N+ps}}dxdy<\infty \right\},
\end{equation*}
the fractional order Sobolev space endowed with the norm
\begin{equation*}
\Vert u\Vert _{W^{s,p}(\Omega )}:=\left( \int_{\Omega }|u|^{p}\;dx+\int_{\Omega }\int_{\Omega }\frac{|u(x)-u(y)|^{p}}{|x-y|^{N+ps}}
dxdy\right) ^{\frac{1}{p}}.
\end{equation*}

\noindent We let
\begin{align*}
W_0^{s,p}(\bOm):=\Big\{u\in W^{s,p}(\RR^N):\; u=0\;\mbox{ on }\;\RR^N\setminus\Omega\Big\},
\end{align*}
and we shall denote by $W^{-s,2}(\bOm)$ the dual of the Hilbert space $W_0^{s,2}(\bOm)$, that is, $W^{-s,2}(\bOm):=(W_0^{s,2}(\bOm))^\star$. The following continuous embeddings hold
\begin{align*}
W_0^{s,2}(\bOm)\hookrightarrow L^2(\Omega)\hookrightarrow W^{-s,2}(\bOm).
\end{align*}

\noindent Next, if $s>1$ is not an integer, we write $s=m+\sigma$ where $m$ is an integer and $0<\sigma<1$. In this case
\begin{align*}
W^{s,p}(\Omega):=\Big\{u\in W^{m,p}(\Omega):\; D^\alpha u\in W^{\sigma,p}(\Omega)\;\mbox{ for any }\;\alpha\;\mbox{ such that }\;|\alpha|=m\Big\}.
\end{align*}
Then $W^{s,p}(\Omega)$ is a Banach space with respect to the norm
\begin{align*}
\|u\|_{W^{s,p}(\Omega)}:=\left(\|u\|_{W^{m,p}(\Omega)}^p+\sum_{|\alpha|=m}\|D^\alpha u\|_{W^{\sigma,p}(\Omega)}^p\right)^{\frac 1p}.
\end{align*}

\noindent If $s=m$ is an integer, then $W^{s,p}(\Omega)$ coincides with the classical integral order Sobolev space $W^{m,p}(\Omega)$. 

We also recall the following definition of the Besov space $B^{s}_{p,q}$, according to \cite[Chapter V, Section 5.1, Formula (60)]{STEIN}:
\begin{align}\label{besov-def}
	B^{s}_{p,q}(\RR^N) :=\left\{u\in L^p(\RR^N):\; \left(\int_{\RR^N}\frac{\norm{u(x+y)-u(y)}{L^p(\RR^N)}^q}{|y|^{N+qs}}\,dy\right)^{\frac{1}{q}}<\infty \right\}, \;\;\;1\le p,q\le\infty,\;\; 0<s<1.
\end{align}  

Notice that, when $p=q$, we have $B^{s}_{p,p}(\RR^N) = W^{s,p}(\RR^N)$. Finally, we recall the definition of the following potential space 
\begin{align}\label{sp-stein}
	\mathscr{L}^p_{2s}(\RR^N):=\Big\{u\in L^p(\RR^N):\;  \fl{s}{u}\in L^p(\RR^N)\Big\},\;\;\;1\le p\le\infty, \;\;s\ge 0,
\end{align}
introduced, for example, in \cite[Chapter V, Section 3.3, Formula (38)]{STEIN}. Note that this same space is sometimes denoted as $H^s_p(\RR^N)$ (see, e.g., \cite[Section 1.3.2]{TRIEB}). 
Here we  adopt the notation $\mathscr{L}^p_{2s}(\RR^N)$.

Let us now introduce the notion of solution that we shall consider. Following \cite{LPPS}, we first consider weak solutions of \eqref{DP} defined as follows.

\begin{definition}\label{weak_sol_def_fe}
Let $f\in L^2((0,T);W^{-s,2}(\bOm))$. 
We say that $u\in L^2((0,T);W_0^{s,2}(\bOm))\cap C([0,T];L^2(\Omega))$ with $u_t\in L^2((0,T);W^{-s,2}(\bOm))$ is a finite energy solution to the parabolic problem \eqref{DP}, if the identity
\begin{align}\label{weak-sol-fe}
	&\int_0^T\int_{\Omega} u_tw\,dxdt + \frac{C_{N,s}}{2}\int_0^T\int_{\RR^N}\int_{\RR^N}\frac{(u(x)-u(y))(w(x)-w(y))}{|x-y|^{N+2s}}\,dxdydt \notag\\
	=& \int_0^T \langle f,v\rangle_{W^{-s,2}(\bOm),W_0^{s,2}(\bOm)}\,dt,
\end{align}
holds, for any $w\in L^2((0,T);W_0^{s,2}(\bOm))$, where $\langle \cdot,\cdot\rangle_{W^{-s,2}(\bOm),W_0^{s,2}(\bOm)}$ denotes the duality pairing between $W^{-s,2}(\bOm)$ and $W_0^{s,2}(\bOm)$.
\end{definition}

\begin{remark}
{\em We observe the following facts.
\begin{enumerate}
\item According to \cite[Theorem 10]{LPPS}, if $u\in L^2((0,T);W_0^{s,2}(\bOm))$ and $u_t\in L^2((0,T);W^{-s,2}(\bOm))$, then $u\in C([0,T];L^2(\Omega))$. Thus  the identity $u(\cdot,0)=0$ makes sense in $L^2(\Omega)$. 
\item When considering right hand side terms  $f\in L^p((0, T);L^p(\Omega))=L^p(\Omega\times(0,T))$ with $p\ge 2$, since we have the continuous embedding $L^p(\Omega\times(0,T))\hookrightarrow L^2((0,T);W^{-s,2}(\bOm))$, this notion of weak finite energy solution suffices.
\item When $f\in L^p(\Omega\times(0,T))$ with $1\le p< 2$, the regularity of the right hand side term does not suffice to define weak finite energy solutions as above. We shall rather consider those defined by duality or transposition.
\end{enumerate}
}
\end{remark}

Duality or transposition solutions of  \eqref{DP} are given by duality with respect to the following class of test functions
\begin{align*}
	\mathcal{P}(\Omega_T) = \Big\{\phi(\cdot,t)\in C^1((0,T),C_0^{\beta}(\Omega)) : \phi\textrm{ is a solution to Problem (P)}\Big\},
\end{align*}
where
\begin{align*}
	(P) = \begin{cases}
		-\phi_t+\fl{s}{\phi}=\psi &\mbox{ in }\;\Omega\times(0,T)=:\Omega_T,
		\\
		\phi\equiv 0 &\mbox{ on }\;(\RR^N\setminus\Omega)\times(0,T),
		\\
		\phi(\cdot,T)\equiv 0 &\mbox{ in }\;\Omega
	\end{cases}
\end{align*}
for $\psi\in C_0^{\infty}(\Omega_T)$.

\begin{definition}\label{weak_sol_def}
Let $f\in L^1(\Omega\times(0,T))$. We say that $u\in C([0,T];L^1(\Omega))$ is a weak duality or transposition solution to the parabolic problem \eqref{DP}, if the identity
\begin{align}\label{weak-sol}
	&\int_0^T\int_{\Omega} u\psi\,dxdt = \int_0^T\int_{\Omega} f\phi\,dxdt 
\end{align}
holds, for any $\phi\in\mathcal{P}(\Omega_T)$ and $\psi\in C_0^{\infty}(\Omega_T)$.
\end{definition}

\begin{remark}
{\em 
The existence and uniqueness of finite energy  weak solutions or the duality/transposition ones (depending on the regularity imposed on the right hand side term $f$) to problem \eqref{DP} is guaranteed by \cite[Theorem 26]{LPPS} and \cite[Theorem 28]{LPPS}, respectively.
If $f\in L^p(\Omega\times(0,T))$, with  $p\ge 2$, finite energy solutions of  \eqref{DP} will be considered while, if $1<p<2$, solutions will be understood in the sense of duality/transposition. In both cases we shall refer to them as weak solutions.
}
\end{remark}

\noindent Our first regularity result concerns the case $p=2$. It reads as follows:

\begin{theorem}\label{reg-thm-2}
Assume $f\in L^2(\Omega\times(0,T))$ and let $u\in L^2((0,T);W^{s,2}_0(\bOm))\cap C([0,T];L^2(\Omega))$ with $u_t\in L^2((0,T);W^{-s,2}(\bOm))$ be the unique finite energy solution of system \eqref{DP}. Then
\begin{align*}
	u\in L^2((0,T);W^{2s,2}_{\rm loc}(\Omega))\cap L^{\infty}((0,T);W^{s,2}_0(\bOm))\;\;\mbox{ and }\; u_t\in L^2(\Omega \times (0, T)).
\end{align*}
\end{theorem}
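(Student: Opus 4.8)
The plan is to combine the abstract maximal $L^2$-regularity for analytic semigroups generated by self-adjoint operators with a purely elliptic interior regularity estimate, freezing the time variable at the very end. First I would introduce the $L^2$-realization of the Dirichlet fractional Laplacian: let $A$ be the nonnegative self-adjoint operator on $L^2(\Omega)$ associated with the closed, symmetric, densely defined form
\[
\mathcal{E}(u,w)=\frac{C_{N,s}}{2}\int_{\RR^N}\int_{\RR^N}\frac{(u(x)-u(y))(w(x)-w(y))}{|x-y|^{N+2s}}\,dx\,dy,\qquad D(\mathcal{E})=W^{s,2}_0(\bOm).
\]
Then $D(A^{1/2})=W^{s,2}_0(\bOm)$, one has $Au=\fl{s}{u}$ for $u\in D(A)$, and $-A$ generates a bounded analytic $C_0$-semigroup on $L^2(\Omega)$. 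With this notation, comparing \eqref{weak-sol-fe} with the weak form of $u'(t)+Au(t)=f(t)$, the unique finite energy solution of \eqref{DP} is exactly the variational solution of the abstract Cauchy problem with $u(0)=0$ and $f\in L^2((0,T);L^2(\Omega))$.

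Second, I would invoke the classical maximal $L^2$-regularity valid for every nonnegative self-adjoint generator on a Hilbert space (via the spectral theorem applied to the Duhamel formula, or equivalently Lions' variational method together with the Lions--Magenes trace theorem). Since $u(0)=0\in D(A^{1/2})$ and $f\in L^2((0,T);L^2(\Omega))$, this yields at once
\[
u\in H^1((0,T);L^2(\Omega))\cap L^2((0,T);D(A)),\qquad u\in C([0,T];D(A^{1/2})),
\]
together with the identity $u_t+Au=f$ in $L^2((0,T);L^2(\Omega))$. Reading off the three pieces of information gives already two of the claims: $u_t\in L^2(\Omega\times(0,T))$, which is the third assertion, and $u\in C([0,T];W^{s,2}_0(\bOm))\subset L^\infty((0,T);W^{s,2}_0(\bOm))$, which is the second. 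Crucially, it also gives $\fl{s}{u}=f-u_t\in L^2((0,T);L^2(\Omega))$.

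Third, to obtain the interior assertion $u\in L^2((0,T);W^{2s,2}_{\loc}(\Omega))$, I would freeze time. For a.e.\ $t\in(0,T)$ the spatial profile $v:=u(\cdot,t)\in W^{s,2}_0(\bOm)$ solves the elliptic problem $\fl{s}{v}=g$ in $\Omega$ with $g:=f(\cdot,t)-u_t(\cdot,t)\in L^2(\Omega)$. The key is an interior elliptic estimate: for every $\Omega'\Subset\Omega$ there exists $C=C(\Omega',\Omega)$ with
\[
\|v\|_{W^{2s,2}(\Omega')}\le C\big(\|g\|_{L^2(\Omega)}+\|v\|_{W^{s,2}_0(\bOm)}\big).
\]
Granting this, squaring, integrating in $t$, and using that both $g$ and $v=u(\cdot,t)$ are square integrable in $t$ (with values in $L^2(\Omega)$ and $W^{s,2}_0(\bOm)$ respectively, the latter by Step~2), gives $\int_0^T\|u(\cdot,t)\|_{W^{2s,2}(\Omega')}^2\,dt<\infty$, which is the first assertion.

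The hard part is precisely this interior elliptic estimate, where the nonlocality of $\fl{s}{\cdot}$ and the complete absence of smoothness of $\pOm$ bite; the restriction to $\Omega'\Subset\Omega$ is forced because global $W^{2s,2}$-regularity up to the boundary fails for rough $\Omega$. My approach would be to fix $\Omega'\Subset\Omega''\Subset\Omega$ and a cutoff $\eta\in C_0^\infty(\Omega'')$ with $\eta\equiv 1$ on $\Omega'$, and to study $w:=\eta v$, which is compactly supported in $\Omega$ and equals $v$ on $\Omega'$. Using the product rule
\[
\fl{s}{(\eta v)}=\eta\,\fl{s}{v}+v\,\fl{s}{\eta}-I_s(\eta,v),\qquad I_s(\eta,v)(x):=C_{N,s}\int_{\RR^N}\frac{(\eta(x)-\eta(y))(v(x)-v(y))}{|x-y|^{N+2s}}\,dy,
\]
one has $\fl{s}{w}=\eta g+v\,\fl{s}{\eta}-I_s(\eta,v)$ on the support of $\eta$, while away from it $w$ is a compactly supported $L^2$-function whose fractional Laplacian is manifestly in $L^2$. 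The terms $\eta g$ and $v\,\fl{s}{\eta}$ lie in $L^2(\RR^N)$ since $g,v\in L^2$ and $\fl{s}{\eta}$ is bounded, so the heart of the matter is the commutator bound $\|I_s(\eta,v)\|_{L^2(\RR^N)}\le C\|v\|_{W^{s,2}(\RR^N)}$. This is plausible by scaling: the factor $\eta(x)-\eta(y)=O(|x-y|)$ gains one order of homogeneity, lowering the effective order of the singular integral acting on $v$ from $2s$ to $2s-1\le s$, which $v\in W^{s,2}$ can afford. Making this rigorous requires splitting $I_s(\eta,v)$ into a near-diagonal part controlled by the Gagliardo seminorm of $v$ and the Lipschitz bound on $\eta$, and a far part controlled by $\|v\|_{L^2}$ and the kernel decay. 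Once $\fl{s}{w}\in L^2(\RR^N)$ is established, together with $w\in L^2(\RR^N)$ it gives $w\in\mathscr{L}^2_{2s}(\RR^N)=W^{2s,2}(\RR^N)$ (the Bessel potential and Sobolev scales coincide for $p=2$), and restricting to $\Omega'$, where $w=v$, yields the desired interior estimate.
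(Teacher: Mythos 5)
Your proposal is correct, but it takes a genuinely different route from the paper's own proof of this theorem. The paper proceeds in the opposite order: it first proves a whole-space parabolic theorem on $\RR^N$ by an Evans-style energy method (substituting $v=ue^{-t}$, multiplying the equation by $v_t$, and only then freezing time and invoking elliptic regularity on $\RR^N$), and then reduces the case of general $\Omega$ to $\RR^N$ by applying the cut-off at the \emph{parabolic} level: $u\eta$ solves $(\partial_t+(-\Delta)^s)(u\eta)=F$ on $\RR^N\times(0,T)$ with $F=\eta f+u\fl{s}{\eta}-I_s(u,\eta)\in L^2(\RR^N\times(0,T))$. You instead obtain all the time regularity abstractly on $\Omega$ itself --- maximal $L^2$-regularity for the nonnegative self-adjoint operator associated with the Gagliardo form, together with the trace embedding $H^1((0,T);L^2)\cap L^2((0,T);D(A))\hookrightarrow C([0,T];D(A^{1/2}))$ and $D(A^{1/2})=W^{s,2}_0(\bOm)$ --- and only afterwards freeze time and apply a quantitative interior \emph{elliptic} estimate slice-wise; that elliptic step (cut-off, product rule, commutator bound $\|I_s(\eta,v)\|_{L^2(\RR^N)}\le C\|v\|_{W^{s,2}(\RR^N)}$ via the near/far splitting, then $\mathscr{L}^2_{2s}(\RR^N)=W^{2s,2}(\RR^N)$) is exactly the paper's Theorem \ref{reg-p-ell} at $p=2$, so your architecture in fact mirrors the paper's Section \ref{Lp-reg} proof of the $L^p$ theorem specialized to $p=2$, with the spectral-theoretic Hilbert-space maximal regularity playing the role of Lamberton's theorem. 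What your route buys: the global-in-$\Omega$ assertions $u_t\in L^2(\Omega\times(0,T))$ and $u\in L^\infty((0,T);W^{s,2}_0(\bOm))$ fall out immediately (indeed strengthened to $u\in C([0,T];W^{s,2}_0(\bOm))$), whereas the parabolic cut-off of the paper, taken literally, directly yields only local versions of these statements; moreover your argument extends verbatim to $p\neq 2$. What the paper's route buys: it is elementary and self-contained, relying only on explicit energy identities rather than semigroup and interpolation machinery. One small imprecision to fix: the identity $\fl{s}{(\eta v)}=\eta g+v\fl{s}{\eta}-I_s(\eta,v)$ holds on all of $\RR^N$ (using $\operatorname{supp}\eta\subset\Omega$ to replace $\eta\fl{s}{v}$ by $\eta g$), with every term globally in $L^2(\RR^N)$, so no case distinction between $\operatorname{supp}\eta$ and its complement is needed --- and note $\fl{s}{(\eta v)}$ does \emph{not} vanish off $\operatorname{supp}\eta$, so the claim that it is ``manifestly in $L^2$'' there should simply be read off from the same formula.
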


\noindent Theorem \ref{reg-thm-2} can be extended to the $L^p$-setting as follows.

\begin{theorem}\label{reg-thm-p}
Let $1<p<\infty$ and $f\in L^p(\Omega \times (0,T))$. Then, problem \eqref{DP} has a unique weak solution $u\in C([0,T];L^p(\Omega))$ such that $u\in L^p\Big((0,T);\mathscr{L}^p_{2s, {\rm loc}}(\Omega)\Big)$ and $u_t\in L^p(\Omega \times (0,T))$. As a consequence we have the following result.
\begin{enumerate}
\item If $1<p<2$ and $s\ne 1/2$, then $u\in L^p\Big((0,T);B^{2s}_{p,2, {\rm loc}}(\Omega)\Big)$.
	
\item If $1<p<2$ and $s= 1/2$, then $u\in L^p\Big((0,T);W^{2s,p}_{\rm loc}(\Omega)\Big)=L^p\Big((0,T);W^{1,p}_{\rm loc}(\Omega)\Big)$.
	
\item If $2\leq p<\infty$, then $u\in L^p\Big((0,T);W^{2s,p}_{\rm loc}(\Omega)\Big)$.
\end{enumerate}

\end{theorem}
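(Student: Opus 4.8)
The plan is to split the argument into an abstract parabolic part, which produces the sharp regularity in time together with the $L^p$-integrability of $(-\Delta)^s u$, and an elliptic part, which upgrades this to interior smoothing in the scale of potential spaces; the $L^p$-versus-$L^2$ dichotomy in statements (a)--(c) will then come purely from embedding theory.

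First I would realize the Dirichlet fractional Laplacian as an operator $A_p$ on $L^p(\Omega)$: on $L^2(\Omega)$ it is the nonnegative self-adjoint operator associated with the bilinear form appearing in \eqref{weak-sol-fe}, and the semigroup it generates is symmetric and sub-Markovian, so it extrapolates consistently to a bounded analytic contraction semigroup on $L^p(\Omega)$ for every $1<p<\infty$. Such semigroups have a bounded $H^\infty$-functional calculus, are $\mathcal R$-sectorial on the UMD spaces $L^p(\Omega)$, and therefore enjoy the maximal $L^p$-regularity property. Applied to \eqref{DP} with zero initial datum and $f\in L^p(\Omega_T)$, this yields a unique strong solution with $u_t,\,A_pu\in L^p(\Omega_T)$ and $u\in C([0,T];L^p(\Omega))$; for $1<p<2$ I would then check, using the consistency of the $L^p$-semigroups and a density argument against the test class $\mathcal P(\Omega_T)$, that this strong solution coincides with the duality/transposition solution of Definition \ref{weak_sol_def}, so that no uniqueness is lost. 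This already gives $u_t\in L^p(\Omega_T)$ and, reading the equation pointwise in time, $(-\Delta)^su(\cdot,t)=f(\cdot,t)-u_t(\cdot,t)\in L^p(\Omega)$ for a.e.\ $t$.

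Next I would freeze the time variable and invoke the local elliptic regularity results established earlier in the paper: from $(-\Delta)^su(\cdot,t)\in L^p(\Omega)$ together with the membership $u(\cdot,t)\in W^{s,2}_0(\bOm)$ (which encodes the exterior vanishing) one concludes $u(\cdot,t)\in\mathscr L^p_{2s,{\rm loc}}(\Omega)$, with a local estimate on any $\Omega'\Subset\Omega$ controlled by $\|f(\cdot,t)\|_{L^p(\Omega)}+\|u_t(\cdot,t)\|_{L^p(\Omega)}+\|u(\cdot,t)\|_{L^p(\Omega)}$. Since the right-hand side belongs to $L^p(0,T)$ as a function of $t$, raising it to the $p$-th power and integrating over $(0,T)$ produces $u\in L^p\big((0,T);\mathscr L^p_{2s,{\rm loc}}(\Omega)\big)$, which is the central assertion.

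Finally, the three cases will follow from the identification $\mathscr L^p_{2s}(\RR^N)=F^{2s}_{p,2}(\RR^N)$ and the standard sandwich $B^{\sigma}_{p,\min(p,2)}\hookrightarrow F^{\sigma}_{p,2}\hookrightarrow B^{\sigma}_{p,\max(p,2)}$ (see \cite{STEIN,TRIEB}), transplanted to the local spaces through a fixed cut-off $\eta\in C_c^\infty(\Omega)$. For $p\ge 2$ this gives $\mathscr L^p_{2s}\hookrightarrow B^{2s}_{p,p}=W^{2s,p}$, yielding (c); for $1<p<2$ with $2s$ non-integer it gives $\mathscr L^p_{2s}\hookrightarrow B^{2s}_{p,2}$, yielding (a); and when $s=1/2$ the exponent $2s=1$ is an integer, so $\mathscr L^p_{2}=W^{1,p}$ coincides with the Sobolev space, giving (b). The main obstacle I anticipate lies in the elliptic step on an arbitrary bounded open set: the local potential-space estimate must depend on $t$ in a measurable and $L^p$-integrable way, and it must be obtained without any boundary regularity, relying solely on interior information and the a priori exterior vanishing built into $W^{s,2}_0(\bOm)$; reconciling the duality solution with the strong one for $1<p<2$ is a secondary technical point.
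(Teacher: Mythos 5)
Your proposal is correct and follows essentially the same route as the paper: abstract maximal $L^p$-regularity for the sub-Markovian semigroup generated by $-(-\Delta)^s$ (the paper simply quotes Lamberton's theorem \cite{lamberton}, which is exactly the maximal-regularity statement you derive via the $H^\infty$-calculus and $\mathcal{R}$-sectoriality on UMD spaces), followed by freezing the time variable, applying the local elliptic regularity of Theorem \ref{reg-p-ell} to $(-\Delta)^s u(\cdot,t)=(f-u_t)(\cdot,t)\in L^p(\Omega)$, and deducing (a)--(c) from the same Stein--Triebel embeddings of $\mathscr{L}^p_{2s}(\RR^N)$ into the Besov scale. The only differences are presentational: the paper cites Lamberton rather than the modern functional-calculus machinery, and it leaves implicit the two technical points you rightly flag, namely the consistency of the semigroup solution with the duality/transposition solution when $1<p<2$ (where, incidentally, the exterior vanishing enters through the regularity \eqref{reg} of duality solutions rather than through membership in $W^{s,2}_0(\bOm)$) and the measurable, $L^p$-integrable dependence on $t$ of the local elliptic estimate needed to integrate over $(0,T)$.
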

In Theorem \ref{reg-thm-p}, with $\mathscr{L}^p_{2s, \textrm{loc}}(\Omega)$ we indicate the potential space 
\begin{align}\label{sp-stein-loc}
	\mathscr{L}^p_{2s, \textrm{loc}}(\Omega):=\Big\{ u\in L^p(\Omega): u\eta \in \mathscr{L}_{2s}^p(\RR^N) \;\textrm{ for any test function $\eta\in\mathcal{D}(\Omega)$} \Big\}.
\end{align}
Analogously, with $B^{2s}_{p,2, {\rm loc}}(\Omega)$ we indicate the Besov space
\begin{align}\label{besov-loc}
	B^{2s}_{p,2, {\rm loc}}(\Omega):=\Big\{ u\in L^p(\Omega): u\eta \in B^{2s}_{p,2}(\RR^N) \;\textrm{ for any test function $\eta\in\mathcal{D}(\Omega)$} \Big\}.
\end{align}

Moreover, our results guarantee that when the right hand side belongs to $L^p(\Omega \times (0, T))$ for $2\le p<\infty$ and for $1<p<2$, $s=1/2$, then the corresponding solution gains locally the maximum possible regularity, that is, it gains one time derivative  and up to $2s$ space derivatives, locally, in $L^p(\Om)$. For $1<p<2$ and $s\neq 1/2$, instead, the local regularity is obtained in the Besov space $B^{2s}_{p,2, {\rm loc}}(\Omega)$, which is strictly larger than $W^{2s,p}_{\textrm{loc}}(\Omega)$.

For the classical Laplace operator (which corresponds to the case $s=1$), this kind of results are standard, see  e.g., \cite[Theorem X.12]{brezis}, \cite[Section 9]{Gris-abs}, \cite[Section 4.1]{LSU}. Also, we recall \cite[Theorem 1]{lamberton} for a more general result in an abstract setting.

Theorems \ref{reg-thm-2} and \ref{reg-thm-p} are natural extensions of analogous results of local regularity for the elliptic problem associated to the fractional Laplacian on a bounded domain, which have been obtained recently in \cite{fl_reg,fl_reg_add}. 

In the recent years, research on regularity of heat equations involving non-local terms has been very active. For instance,  H\"older regularity  was proved  in \cite{FR,KS}. Boundary regularity has also been analyzed showing that, if $f=0$ and taking  initial data $u(\cdot,0)=u_0\in L^2(\Omega)$, the corresponding solution to \eqref{DP} is such that $u(\cdot,t)$ belongs to $C^s(\RR^N)$ for all $t>0$ and satisfies $u(\cdot,t)/\rho^s\in C^{s-\varepsilon}(\Omega)$ for any $\varepsilon>0$, $\rho(x)=\textrm{dist}(x,\partial\Omega)$ being the distance to the boundary function. Concerning regularity in the Sobolev setting, we refer instead to \cite[Theorem 26]{LPPS}, where it has been  proved the existence of a finite energy solution to \eqref{DP}, according to Definition \ref{weak_sol_def} above. However, to the best of our knowledge, our Theorems \ref{reg-thm-2} and \ref{reg-thm-p} providing maximal space-time local regularity are new.

The controllability of parabolic equations involving non-local terms has also been investigated. We refer for instance to \cite{FLZ} where null controllability issues were addressed for  heat equations involving  non-local lower order terms. On the other hand,  \cite{MZ,miller} dealt with the control of heat equations involving the \textit{spectral} fractional Laplacian (see \cite[Section 1]{MZ} for the definition of this operator), proving that null controllability holds for $s>1/2$, while for $s\leq 1/2$ the equation fails to be controllable. Notice that this operator does not coincide with \eqref{fl}.

The present paper is organized as follows. In Section \ref{elliptic_sec}, we will recall the sharp local regularity results obtained in \cite{fl_reg,fl_reg_add} for the elliptic problems associated to the fractional Laplacian. These results will be necessary in the proof of Theorems \ref{reg-thm-2} and \ref{reg-thm-p}. In Section \ref{L2-reg}, we give the proof of Theorem  \ref{reg-thm-2}, using the corresponding result for the classical Laplace operator in \cite[Section 7.1.3, Theorem 5]{evans}, employing a cut-off argument and using \cite[Theorem 1.2]{fl_reg}. In Section \ref{Lp-reg} we give the proof of Theorem \ref{reg-thm-p} by applying the results contained in \cite{lamberton}. Finally, in Section \ref{open_pb}, we present some open problems and perspectives that are closely related to our work.

\section{Regularity results for the elliptic problem}\label{elliptic_sec}

In this section, we recall some regularity results for weak solutions to the elliptic problem associated to the fractional Laplacian on a bounded open set. These results have been recently obtained in \cite{fl_reg,fl_reg_add}, and they will be fundamental in the proof of Theorems \ref{reg-thm-2} and \ref{reg-thm-p}. Therefore,  throughout this section we are going to consider the following elliptic problem

\begin{align}\label{DP-ell}
	\begin{cases}
		\fl{s}{u}=f &\mbox{ in }\;\Omega,
		\\
		u\equiv 0 &\mbox{ on }\;\RR^N\setminus\Omega.
	\end{cases}
\end{align}

\noindent Let us start by recalling the definition of a weak solution, according to \cite{fl_reg,LPPS}.

\begin{definition}\label{weak_sol_def-en}
Let $f\in W^{-s,2}(\bOm)$. A function $u\in W_0^{s,2}(\bOm)$ is said to be a finite energy solution to the Dirichlet problem \eqref{DP-ell} if for every $v\in W_0^{s,2}(\bOm)$, the equality
\begin{align}\label{wek-sol-en}
\frac{C_{N,s}}{2}\int_{\RR^N}\int_{\RR^N}\frac{(u(x)-u(y))(v(x)-v(y))}{|x-y|^{N+2s}}\;dxdy=\langle f,v\rangle_{W^{-s,2}(\bOm),W_0^{s,2}(\bOm)}
\end{align}
holds.
\end{definition}

We notice that, when $f\in L^p(\Omega)$ with $1<p<2$ and it does not belong to $W^{-s,2}(\bOm)$, it is not natural to consider finite energy solutions for the problem \eqref{DP-ell}. As for the parabolic problem above,  we shall introduce an alternative notion of solution. This will be given by duality with respect to the following class of test functions:
\begin{align*}
	\mathcal{T}(\Omega) = \Big\{\phi : \fl{s}{\phi} = \psi\;\;\textrm{ in }\;\Omega,\;\phi=0\;\;\textrm{ in }\;\RR^N\setminus\Omega,\;\psi\in C_0^{\infty}(\Omega)\Big\}.
\end{align*}

\begin{definition}\label{weak-sol-def}
Let $f\in L^1(\Omega)$. We say that $u\in L^1(\Omega)$ is a weak duality or transposition solution to \eqref{DP-ell} if the equality
\begin{align*}
	\int_{\Omega} u\psi\,dx = \int_{\Omega}f\phi\,dx,
\end{align*} 
holds for any $\phi\in\mathcal{T}(\Omega)$ and $\psi\in C_0^{\infty}(\Omega)$.
\end{definition}

The existence and uniqueness of finite energy  weak solutions or the duality/transposition ones (depending on the regularity imposed on the right hand side term $f$) to problem \eqref{DP-ell} are guaranteed by \cite[Theorem 12]{LPPS} and \cite[Theorem 23]{LPPS}, respectively.
If $f\in L^p(\Omega)$, with  $p\ge 2$, finite energy solutions of  \eqref{DP-ell} will be considered while, if $1<p<2$, solutions will be understood in the sense of duality/transposition. In both cases, we shall refer to them as weak solutions. Moreover, we notice that, according to Definition \ref{weak-sol-def}, duality solutions do not require that $f$ belongs to the dual space $W^{-s,2}(\bOm)$. Finally, we also notice that, if $f\in L^p(\Omega)$ with $p\geq 2$, we have the continuous embedding  $L^p(\Omega)\hookrightarrow L^2(\Omega)\hookrightarrow W^{-s,2}(\bOm)$, meaning that the property $f\in W^{-s,2}(\bOm)$ is automatically guaranteed.

 Concerning  the regularity of the solutions to \eqref{DP-ell}, the following result has been proved in \cite{fl_reg,fl_reg_add}.

\begin{theorem}[\bf $L^p$-Local elliptic regularity]\label{reg-p-ell}
Let $1<p<\infty$. Given $f\in L^p(\Omega)$, let $u$ be the unique weak solution to the Dirichlet problem \eqref{DP-ell}. Then $u\in \mathscr{L}^p_{2s, {\rm loc}}(\Omega)$. As a consequence we have the following result.
\begin{enumerate}
\item If $1<p<2$ and $s\ne 1/2$, then $u\in B^{2s}_{p,2, {\rm loc}}(\Omega)$.

\item If $1<p<2$ and $s= 1/2$, then $u\in W_{\rm loc}^{2s,p}(\Omega)=W_{\rm loc}^{1,p}(\Omega)$.	
	
\item If $2\leq p<\infty$, then $u\in W^{2s,p}_{\rm loc}(\Omega)$.
\end{enumerate}
\end{theorem}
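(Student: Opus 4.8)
The plan is to prove first the gain of $2s$ derivatives in the Bessel potential scale, i.e. that $u\in\mathscr{L}^p_{2s,\mathrm{loc}}(\Omega)$, and then to read off the Besov and Sobolev statements (1)--(3) by embedding. Since $\mathscr{L}^p_{2s,\mathrm{loc}}(\Omega)$ is defined in \eqref{sp-stein-loc} through multiplication by cut-offs, it suffices to fix $\eta\in\mathcal{D}(\Omega)$ and show $\eta u\in\mathscr{L}^p_{2s}(\RR^N)$, that is, $\fl{s}{(\eta u)}\in L^p(\RR^N)$. The function $\eta u$ is compactly supported in $\Omega$, and the existence theory of \cite[Theorems 12 and 23]{LPPS} provides $u\in L^p(\Omega)$ (with a gain of integrability coming from the Riesz potential, since $\Omega$ is bounded); extending $u$ by zero we thus already have $\eta u\in L^p(\RR^N)$, and the whole point is the integrability of its fractional Laplacian.

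I would localize by means of the pointwise Leibniz-type identity
\begin{align*}
\fl{s}{(\eta u)}=\eta\,\fl{s}{u}+u\,\fl{s}{\eta}-\mathcal{B}_s(u,\eta),\qquad \mathcal{B}_s(u,\eta)(x):=C_{N,s}\,\mathrm{P.V.}\!\int_{\RR^N}\frac{(u(x)-u(y))(\eta(x)-\eta(y))}{|x-y|^{N+2s}}\,dy.
\end{align*}
The first term equals $\eta f$, which lies in $L^p(\RR^N)$ because $\eta$ is supported where $\fl{s}{u}=f$ and $f\in L^p(\Omega)$. In the second term $\fl{s}{\eta}$ is smooth, bounded and decays like $|x|^{-N-2s}$, so $\fl{s}{\eta}\in L^\infty\cap L^1$ and $u\,\fl{s}{\eta}\in L^p(\RR^N)$. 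Everything therefore reduces to showing that the bilinear commutator $\mathcal{B}_s(u,\eta)$ belongs to $L^p(\RR^N)$.

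The commutator is the heart of the matter, and I would control it by a finite bootstrap in the smoothness scale. The key technical estimate, which I expect to be the main obstacle, is that the Lipschitz gain $\eta(x)-\eta(y)=O(|x-y|)$ makes $\mathcal{B}_s(\cdot,\eta)$ behave as an operator of order $2s-1$, so that $u\in\mathscr{L}^p_{\alpha,\mathrm{loc}}(\Omega)$ forces $\mathcal{B}_s(u,\eta)\in\mathscr{L}^p_{\alpha-(2s-1)}(\RR^N)$ locally, the bound being of Kato--Ponce / fractional-Leibniz commutator type; the far-field part of $\mathcal{B}_s$ is harmless, being controlled through an integrable kernel by Young's inequality. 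Granting this order reduction one iterates the identity above. When $s<1/2$ the order $2s-1$ is negative, so already $u\in L^p=\mathscr{L}^p_{0}$ yields $\mathcal{B}_s(u,\eta)\in L^p$ and hence $\eta u\in\mathscr{L}^p_{2s}$ in a single step. When $1/2\le s<1$ one starts from $u\in\mathscr{L}^p_{0,\mathrm{loc}}(\Omega)$ and gains one derivative at each step on nested cut-offs, $\mathscr{L}^p_{0}\to\mathscr{L}^p_{1}\to\mathscr{L}^p_{2s}$, reaching the target after finitely many iterations. In every case this gives $u\in\mathscr{L}^p_{2s,\mathrm{loc}}(\Omega)$ (for $p=2$ it recovers the known $H^{2s}_{\mathrm{loc}}$ regularity of \cite[Theorem 1.2]{fl_reg}).

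Finally, the three consequences follow from the identification $\mathscr{L}^p_{2s}=H^{2s}_p=F^{2s}_{p,2}$ and the standard embeddings between Triebel--Lizorkin, Besov and Slobodeckij spaces (see \cite{STEIN,TRIEB}). When $2s$ is not an integer one has $W^{2s,p}=B^{2s}_{p,p}$ together with $F^{2s}_{p,2}\hookrightarrow B^{2s}_{p,\max(p,2)}$: for $2\le p<\infty$ this gives $\mathscr{L}^p_{2s}\hookrightarrow B^{2s}_{p,p}=W^{2s,p}$, which is case (3), while for $1<p<2$ it gives $\mathscr{L}^p_{2s}\hookrightarrow B^{2s}_{p,2}$, which is case (1). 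In the borderline case $s=1/2$ the order $2s=1$ is an integer and the Bessel potential space coincides with the classical Sobolev space, $\mathscr{L}^p_{2}=H^{1}_p=W^{1,p}$, which is case (2). Localizing each embedding through the cut-off $\eta$ transfers it to the corresponding spaces \eqref{sp-stein-loc}--\eqref{besov-loc}, completing the proof.
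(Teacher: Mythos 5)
Your localization skeleton is exactly the paper's: the cut-off $\eta$, the Leibniz identity $\fl{s}{(\eta u)}=\eta f+u\fl{s}{\eta}-I_s(u,\eta)$, reduction to the full-space Poisson equation, and then Stein's embeddings $\mathscr{L}^p_{2s}(\RR^N)\subset B^{2s}_{p,2}(\RR^N)$ for $1<p<2$, $\mathscr{L}^p_{1}(\RR^N)=W^{1,p}(\RR^N)$, and $\mathscr{L}^p_{2s}(\RR^N)\subset B^{2s}_{p,p}(\RR^N)=W^{2s,p}(\RR^N)$ for $p\ge 2$ (the paper's Theorem \ref{re-r-N}). Where you genuinely diverge is in handling the commutator, which both you and the paper identify as the heart of the matter. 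The paper does not bootstrap: it first imports the a priori interior regularity $u\in W^{s,p}(\Omega)$ --- immediate from the energy space when $p\ge 2$, and supplied by the basic estimate \eqref{reg} of \cite{LPPS} for duality solutions when $1<p<2$ --- and then proves $I_s(u,\eta)\in L^p(\RR^N)$ in a single elementary step: H\"older with exponents $p,p'$ splits the kernel as $|x-y|^{-(N+sp)/p}\cdot|x-y|^{-(N+sp')/p'}$, the Lipschitz bound on $\eta$ makes the $p'$-integral uniformly finite, and what remains is precisely the $W^{s,p}(\omega_2)$ seminorm of $u$ near the support plus $L^p$ tails controlled via $|x-y|\ge C(1+|y|)$ and the vanishing of $u$ outside $\Omega$. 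Your route instead treats $v\mapsto\mathcal{B}_s(v,\eta)$ as an operator of order $2s-1$ and bootstraps from $u\in L^p$ alone; granting that bound, the iteration count and the inversion steps you sketch are correct (two steps suffice for $s>1/2$), and your scheme has the structural advantage of never needing the intermediate $W^{s,p}(\Omega)$ information from \cite{LPPS}.

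The caveat is that the order-reduction bound is exactly the step you leave unproved (``granting this order reduction''), and it does not follow from the standard Kato--Ponce inequalities as usually stated: for $s>1/2$ your first iteration needs $\mathcal{B}_s(\cdot,\eta):L^p\to\mathscr{L}^p_{1-2s}$, a \emph{negative-order} target, which in practice one proves by pseudodifferential or paraproduct calculus ($[\fl{s}{},\eta]$ is a pseudodifferential operator of order $2s-1$ for smooth $\eta$), together with a Mikhlin-multiplier lifting to invert $\fl{s}{}$ against data of negative smoothness. That machinery is available --- it is essentially the Grubb--Taylor route the paper cites as an alternative --- but the paper's explicit aim is to avoid it, and its one-shot H\"older estimate shows the bootstrap is unnecessary. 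Two smaller points: at $s=1/2$ the identification is $\mathscr{L}^p_{1}=H^1_p=W^{1,p}$ (you wrote $\mathscr{L}^p_{2}$), and you should note that this same identification, valid for all $1<p<\infty$, is what covers case (3) at $s=1/2$, where $2s=1$ is an integer and your equality $W^{2s,p}=B^{2s}_{p,p}$ fails.
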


The proof of Theorem \ref{reg-p-ell} requires a cut-off argument that allows us to reduce the problem to the whole space case, for which the result is already known. In particular, we have the following.

\begin{theorem}\label{re-r-N}
Let $1<p<\infty$. Given $F\in L^p(\RR^N)$, let $u$ be the unique weak solution to the fractional Poisson type equation
\begin{equation}\label{PE}
	\fl{s}{u}=F\;\;\mbox{ in }\;\RR^N.
\end{equation}
Then $u\in \mathscr{L}^p_{2s}(\RR^N)$. As a consequence we have the following. 
\begin{enumerate}
\item If $1<p<2$ and $s\ne 1/2$, then $u\in B^{2s}_{p,2}(\RR^N)$.
\item If $1<p<2$ and $s= 1/2$, then $u\in W^{2s,p}(\RR^N)=W^{1,p}(\RR^N)$.
\item If $2\leq p<\infty$, then $u\in W^{2s,p}(\RR^N)$.
\end{enumerate}
\end{theorem}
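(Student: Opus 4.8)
The plan is to reduce everything to the mapping properties of the operator $(-\Delta)^s$ on the full space, where the Fourier transform turns it into multiplication by $|\xi|^{2s}$. First I would show that the solution $u$ of \eqref{PE} lies in $\mathscr{L}^p_{2s}(\RR^N)$. Since the equation is posed on all of $\RR^N$, one can write $u$ explicitly via the Riesz potential: $u = I_{2s} F$, where $I_{2s}$ is the convolution operator with symbol $|\xi|^{-2s}$, i.e. $u = (-\Delta)^{-s} F$ in the Fourier sense. By construction $(-\Delta)^s u = F \in L^p(\RR^N)$, and together with the fact that $u \in L^p(\RR^N)$ (which is the assumed regularity of the weak solution, and can be justified by the continuity of the Riesz potential between the appropriate Lebesgue spaces or by the a priori framework of \cite{STEIN}), this is precisely the statement $u \in \mathscr{L}^p_{2s}(\RR^N)$. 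So the membership in the potential space is essentially immediate from the definition \eqref{sp-stein} once existence and uniqueness are granted.

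The substance of the theorem is translating the potential-space regularity into Besov and Sobolev regularity, and here I would quote the standard identifications between these scales found in \cite{STEIN,TRIEB}. The key embedding theorems state that $\mathscr{L}^p_{2s}(\RR^N) = H^{2s}_p(\RR^N)$ embeds into the Besov space $B^{2s}_{p,q}(\RR^N)$ with $q = \max\{p,2\}$, and more precisely one has the sharp relations
\begin{align*}
  \mathscr{L}^p_{2s}(\RR^N) \hookrightarrow B^{2s}_{p,2}(\RR^N) \quad (1 < p \le 2), \qquad B^{2s}_{p,2}(\RR^N) \hookrightarrow \mathscr{L}^p_{2s}(\RR^N) \quad (2 \le p < \infty).
\end{align*}
For part (a), with $1<p<2$ and $s \ne 1/2$, this gives directly $u \in B^{2s}_{p,2}(\RR^N)$, and since $2s \ne 1$ is not an integer the Besov space $B^{2s}_{p,2}$ does not coincide with $W^{2s,p} = B^{2s}_{p,p}$ (the second index differs), which is why one cannot upgrade to the Sobolev space. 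For part (b), with $s = 1/2$ so that $2s = 1$ is an integer, the potential space $\mathscr{L}^p_1(\RR^N)$ coincides with the classical Sobolev space $W^{1,p}(\RR^N)$ by the Calder\'on characterization of $H^1_p$, giving the stated equality $W^{2s,p}(\RR^N) = W^{1,p}(\RR^N)$. For part (c), with $2 \le p < \infty$, the embedding $\mathscr{L}^p_{2s}(\RR^N) \hookrightarrow B^{2s}_{p,p}(\RR^N) = W^{2s,p}(\RR^N)$ (using $q = \max\{p,2\} = p$ in this range, together with $p = q$ giving the Sobolev-Slobodeckij space) yields $u \in W^{2s,p}(\RR^N)$.

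The main obstacle, and the only genuinely delicate point, is making sure one invokes the correct second Besov index in each regime: the inclusions between $H^s_p$ and $B^s_{p,q}$ go in opposite directions depending on whether $p \le 2$ or $p \ge 2$, and the fine-index $q$ in the target Besov space is exactly $\max\{p,2\}$. This is what forces the dichotomy in the statement — one genuinely gains Sobolev regularity only when $2s$ is an integer (so $B^{2s}_{p,2}$ collapses to something comparable to $W^{2s,p}$ via interpolation, as in case (b)) or when $p \ge 2$ (case (c)), whereas for $1<p<2$ with $2s$ non-integer one is stuck with the strictly larger Besov space. I would therefore be careful to cite the precise Littlewood-Paley or interpolation statements from \cite{STEIN,TRIEB} rather than a generic embedding, since the sharpness of the result depends entirely on tracking that third index correctly. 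Everything else is a direct consequence of the Fourier-multiplier description of $(-\Delta)^s$ on $\RR^N$.
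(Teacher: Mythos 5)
Your proposal is correct and follows essentially the same route as the paper: membership in $\mathscr{L}^p_{2s}(\RR^N)$ is immediate from the definition \eqref{sp-stein} once $u,\,\fl{s}{u}\in L^p(\RR^N)$, and the three cases are exactly the embedding $\mathscr{L}^p_{2s}(\RR^N)\hookrightarrow B^{2s}_{p,2}(\RR^N)$ for $1<p\le 2$ (\cite[Chapter V, Section 5.3, Theorem 5(B)]{STEIN}), the Calder\'on identity $\mathscr{L}^p_{1}(\RR^N)=W^{1,p}(\RR^N)$ (\cite[Chapter V, Section 3.3, Theorem 3]{STEIN}), and $\mathscr{L}^p_{2s}(\RR^N)\hookrightarrow B^{2s}_{p,p}(\RR^N)=W^{2s,p}(\RR^N)$ for $p\ge 2$ (\cite[Chapter V, Section 5.3, Theorem 5(A)]{STEIN}). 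One small caution: your displayed line for $2\le p<\infty$ states the reverse embedding $B^{2s}_{p,2}(\RR^N)\hookrightarrow\mathscr{L}^p_{2s}(\RR^N)$, which is true but not what case (c) requires -- the forward embedding into $B^{2s}_{p,p}(\RR^N)$ with fine index $q=\max\{p,2\}=p$, which you correctly invoke later in the text, is the one that yields the conclusion.
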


Theorem \ref{re-r-N} is a classical result whose proof can be done by combining several results on singular integrals and Fourier transform contained in \cite[Chapter V]{STEIN}. See also \cite{fl_reg,fl_reg_add}. In particular:

\begin{itemize}
\item If $1<p<2$ and $s\ne 1/2$, then the result follows from \cite[Chapter V, Section 5.3, Theorem 5(B)]{STEIN}, which provides the inclusion $\mathscr{L}^p_{2s}(\RR^N)\subset B^{2s}_{p,2}(\RR^N)$. Moreover, an explicit counterexample showing that sharper inclusions are not possible has been given in  \cite[Chapter V, Section 6.8]{STEIN}. 

\item If $1<p<2$ and $s= 1/2$, then applying \cite[Chapter V, Section 3.3, Theorem 3]{STEIN} we have $\mathscr{L}^p_{2s}(\RR^N)=\mathscr{L}^p_{1}(\RR^N)=W^{1,p}(\RR^N)$.

\item If $2\leq p<\infty$, then \cite[Chapter V, Section 5.3, Theorem 5(A)]{STEIN} yields $u\in B^{2s}_{p,p}(\RR^N)$ and this latter space, by definition, coincides with $W^{2s,p}(\RR^N)$  (see, e.g., \cite[Chapter V, Section 5.1, Formula (60)]{STEIN}).
\end{itemize}

While developing the cut-off argument that we mentioned above, as an intermediate step we need to show that $u\in W^{s,p}(\Omega)$. Notice that, for $p\geq 2$,  this is true for all weak solutions to \eqref{DP} by classical embedding results. When $1<p<2$, instead, according to \cite[Theorem 23]{LPPS},  weak duality solutions to \eqref{DP-ell} are such that
	\begin{align}\label{reg}
		\fl{\frac{s}{2}}{u}\in L^p(\Omega), \;\;\;\forall \;p\in(1, N/(N-s))
	\end{align}
an this implies that $u\in W^{s,p}(\Omega)$ too.

\begin{proof}[Proof of Theorem \ref{reg-p-ell}]
For the sake of completeness we include the proof. 

We start by noticing that, assuming $f\in L^p(\Omega)$, $1<p<\infty$, we have that \eqref{DP-ell} has a unique weak solution $u$ (either the finite-energy or the duality one) and that, from the discussion above, we have $u\in W^{s,p}(\Omega)$. In particular, $u\in L^p(\Omega)$. 

As we have mentioned above, our strategy is based on a cut-off argument that will allow us to show that the solutions of the fractional Dirichlet problem in $\Omega$, after cut-off, are solutions of the elliptic problem on the whole space $\RR^N$, for which Theorem \ref{re-r-N} holds. For this purpose, given $\omega$ and $\widetilde\omega$ two open subsets of the domain $\Omega$ such that $\widetilde\omega\Subset\omega\Subset\Omega$, we introduce a cut-off function $\eta\in \mathcal D(\omega)$ such that
\begin{equation}\label{eta}
\begin{cases}
\eta(x)\equiv 1\;\;\;&\mbox{ if }\; x\in\widetilde\omega\\
0\le \eta(x)\le 1&\mbox{ if }\; x\in\omega\setminus\widetilde\omega\\
\eta(x)=0&\mbox{ if }\; x\in\RR^N\setminus\omega.
\end{cases}
\end{equation}

Let $\omega$ and $\eta\in\mathcal D(\omega)$ be respectively the set and the cut-off function constructed in \eqref{eta}. We consider the function $u\eta\in W^{s,p}(\RR^N)$ and we have that $(-\Delta)^s(u\eta)$ is given by (see, e.g., \cite[Proposition 1.5]{fl_reg} or \cite{ROS})
\begin{align}\label{for-del-prod}
	\fl{s}{(u\eta)} = \eta f + u\fl{s}{\eta} - I_s(u,\eta),
\end{align}
where $I_s(u,\eta)$ is a remainder term which is given by 
\begin{align}\label{Is-formula}
I_s(u,\eta)(x):=C_{N,s}\int_{\RR^N}\frac{(u(x)-u(y))(\eta(x)-\eta(y))}{|x-y|^{N+2s}}\;dy,\;\;x\in\RR^N.
\end{align}
Let $\omega_1, \omega_2$ be open sets such that
\begin{align}\label{omega12}
\overline{\omega}\subset\omega_1\subset\overline{\omega}_1\subset\omega_2
\subset\overline{\omega}_2\subset\Omega. 
\end{align} 

Since the function $\eta$ and the set $\omega$ in \eqref{eta} are arbitrary, it follows that  $u\in W^{s,p}(\omega_2)$. Thus we have $u\in W^{s,p}(\omega_2)\cap L^p(\Omega)$. Let
\begin{align*}
g:=u(-\Delta)^s\eta  -I_s(u,\eta).
\end{align*}
We now claim that $g\in L^p(\RR^N)$ and there exists a constant $C>0$ such that
\begin{align}\label{norm-f-p}
\|g\|_{L^p(\RR^N)}\le C\left(\|u\|_{W^{s,p}(\omega_2)}+\|u\|_{L^p(\Omega)}\right).
\end{align}
Indeed, it is clear that $g$ is defined on all $\RR^N$. Moreover
\begin{align}\label{est1-p}
\|u(-\Delta)^s\eta\|_{L^p(\RR^N)}^p=\int_{\Omega}|u(-\Delta)^s\eta|^p\;dx\le \|(-\Delta)^s\eta\|_{L^\infty(\Omega)}^p\|u\|_{L^p(\Omega)}^p.
\end{align}
For estimating the term $I_s$, we use the decomposition
\begin{align*}
I_s(u,\eta)(x):=&C_{N,s}\int_{\RR^N}\frac{(u(x)-u(y))(\eta(x)-\eta(y))}{|x-y|^{N+2s}}\;dy\\
=&C_{N,s}\int_{\omega_1}\frac{(u(x)-u(y))(\eta(x)-\eta(y))}{|x-y|^{N+2s}}\;dy\\
&+C_{N,s}\eta(x)\int_{\RR^N\setminus\omega_1}\frac{u(x)-u(y)}{|x-y|^{N+2s}}\;dy =\mathbb I_1(x)+\mathbb I_2(x),\;\;x\in\RR^N,
\end{align*}
where we have set
\begin{align*}
\mathbb I_1(x):=C_{N,s}\int_{\omega_1}\frac{(u(x)-u(y))(\eta(x)-\eta(y))}{|x-y|^{N+2s}}\;dy,\;\;x\in\RR^N,
\end{align*}
and
\begin{align*}
\mathbb I_2(x):=C_{N,s}\eta(x)\int_{\RR^N\setminus\omega_1}\frac{u(x)-u(y)}{|x-y|^{N+2s}}\;dy,\;\;x\in\RR^N.
\end{align*}
Let $p':=p/(p-1)$. Using the H\"older inequality, we get that for a.e. $x\in\RR^N$,
\begin{align}\label{CS1-p}
|\mathbb I_1(x)|\le C_{N,s}\left(\int_{\omega_1}\frac{|u(x)-u(y)|^p}{|x-y|^{N+sp}}\;dy\right)^{\frac 1p}\left(\int_{\omega_1}\frac{|\eta(x)-\eta(y)|^{p'}}{|x-y|^{N+sp'}}\;dy\right)^{\frac 1{p'}}.
\end{align}

Let $x\in\omega_1$ be fixed and $R>0$ such that $\omega_1\subset B(x,R)$. Using the Lipschitz continuity of the function $\eta$, we obtain that there exists constant $C>0$ such that
\begin{align}\label{CS2-p}
\int_{\omega_1}\frac{|\eta(x)-\eta(y)|^{p'}}{|x-y|^{N+sp'}}\;dy\le C\int_{\omega_1}\frac{dy}{|x-y|^{N+sp'-p'}}\le C\int_{B(x,R)}\frac{dy}{|x-y|^{N+sp'-p'}}\le C.
\end{align}

In what follows, we will employ the following estimate. Let $A\subset\RR^N$ be a bounded set and $B\subset\RR^N$ an arbitrary set. Then there exists a constant $C>0$ (depending on $A$ and $B$) such that
\begin{align}\label{ine-dist}
|x-y|\ge C(1+|y|),\;\;\forall\;x\in A,\;\forall\;y\in \RR^N\setminus B,\;\mbox{dist}(A,\RR^N\setminus B)=\delta>0.
\end{align}
Now, using \eqref{CS1-p}, \eqref{CS2-p} and \eqref{ine-dist}, we get 
\begin{align}\label{I1-p}
\int_{\RR^N}|\mathbb I_1(x)|^p\;dx\le& C\left(\int_{\omega_2} \int_{\omega_1}\frac{|u(x)-u(y)|^p}{|x-y|^{N+sp}}\;dydx+\int_{\RR^N\setminus\omega_2} \int_{\omega_1}\frac{|u(x)-u(y)|^p}{|x-y|^{N+sp}}\;dydx\right)\notag\\
\le &C\left(\|u\|_{W^{s,p}(\omega_2)}^p+\int_{\RR^N\setminus\omega_2} \int_{\omega_1}\frac{|u(x)|^p+|u(y)|^p}{(1+|x|)^{N+sp}}\;dydx\right)\notag\\
\le &C\left(\|u\|_{W^{s,p}(\omega_2)}^p+\|u\|_{L^p(\Omega)}^p\right) ,
\end{align}
where we have also used that $u=0$ on $\RR^N\setminus\Omega$.
Recall that $\mathbb I_2=0$ on $\RR^N\setminus\omega$. 
Then using the H\"older inequality, we get that 
\begin{align}\label{E1-1}
|\mathbb I_2(x)|^p\le C\left(\int_{\RR^N\setminus\omega_1}\frac{\eta^{p'}(x)dy}{|x-y|^{N+sp'}}\right)^{p-1}\int_{\RR^N\setminus\omega_1}\frac{|u(x)-u(y)|^p}{|x-y|^{N+sp}}\;dy.
\end{align}
For any $y\in \RR^N\setminus\omega_1$, we have that
\begin{align*}
\frac{\eta^{p'}(x)}{|x-y|^{N+sp'}}=\frac{\chi_{\overline{\omega}}(x)\eta^{p'}(x)}{|x-y|^{N+sp'}}\le \chi_{\overline{\omega}}(x)\eta^{p'}(x)\sup_{x\in\overline{\omega}}\frac{1}{|x-y|^{N+sp'}}.
\end{align*}
So there exists a constant $C>0$ such that
\begin{align}\label{E2-2}
\int_{\RR^N\setminus\omega_1}\frac{\eta^{p'}(x)dy}{|x-y|^{N+sp'}}\le \chi_{\overline{\omega}}(x)\eta^{p'}(x)\int_{\RR^N\setminus\omega_1}\frac{dy}{\mbox{dist}(y,\partial\overline{\omega})^{N+sp'}}\le C\chi_{\overline{\omega}}(x)\eta^{p'}(x).
\end{align}

In \eqref{E2-2} we have also used that the integral is finite which follows from the fact that $\mbox{dist}(\partial\omega_1,\partial\overline{\omega})\ge\delta>0$ together with the fact that $\mbox{dist}(y,\partial\overline{\omega})$ grows linearly as $y$ tends to infinity and $N+sp'>N$.

Since $\chi_{\overline{\omega}}\eta^{p'}\in L^\infty(\omega)$, and using \eqref{E1-1}, \eqref{E2-2} and \eqref{ine-dist}, we also get that there exists a constant $C>0$ such that
\begin{align}\label{I2-p}
	\int_{\RR^N}|\mathbb I_2(x)|^p\;dx=&\int_{\omega}|\mathbb I_2(x)|^p\;dx\le C\int_{\omega}\int_{\RR^N\setminus\omega_1}\frac{|u(x)-u(y)|^p}{|x-y|^{N+sp}}\;dydx\notag
	\\
	\le &C\int_{\omega}\int_{\RR^N\setminus\omega_1}\frac{|u(x)|^p+|u(y)|^p}{(1+|y|)^{N+sp}}\;dydx\le C\|u\|_{L^p(\Omega)}^p,
\end{align}
where we have used again that $u=0$ on $\RR^N\setminus\Omega$.
Estimate \eqref{norm-f-p} follows from \eqref{est1-p}, \eqref{I1-p}, \eqref{I2-p} and we have shown the claim. We therefore proved that $\eta u$ is a weak solution to the Poisson equation \eqref{PE} with $F$ given by $F=\eta f+g$. Since $F\in L^p(\RR^N)$, it follows from Theorem \ref{re-r-N} that $\eta u\in \mathscr{L}^p_{2s}(\RR^N)$. We have shown that $u\in \mathscr{L}^p_{2s, \textrm{loc}}(\Omega)$. As a consequence we have the following results. 
\begin{enumerate}
\item If $1<p<2$ and $s\ne 1/2$, then $\eta u\in B^{2s}_{p,2}(\RR^N)$, hence $u\in B^{2s}_{p,2, \textrm{loc}}(\Omega)$.
\item If $1<p<2$ and $s= 1/2$, then $\eta u\in W^{2s,p}(\RR^N)=W^{1,p}(\RR^N)$, hence $u\in W^{2s,p}_{\textrm{loc}}(\Omega)=W^{1,p}_{\textrm{loc}}(\Omega)$.
\item If $2\leq p<\infty$, then $\eta u\in W^{2s,p}(\RR^N)$, hence $u\in W^{2s,p}_{\textrm{loc}}(\Omega)$.
\end{enumerate}
\noindent The proof is finished.
\end{proof}
We conclude this section mentioning that Theorem \ref{reg-p-ell} can be proved also using techniques from pseudo-differential calculus (see, e.g., \cite[Section 7]{grubb} or \cite[Chapter XI, Theorem 2.5]{taylor}).
Our approach is different and provides a proof based on basic estimates of solutions of general elliptic operators. In particular, our proofs do not require any knowledge of pseudo-differential operators theory.

\section{Proof of Theorem \ref{reg-thm-2}}\label{L2-reg}

\noindent The proof of Theorem \ref{reg-thm-2} employs a cut-off argument, as in Theorem \ref{reg-p-ell}. In particular: 
\begin{itemize}
	\item Firstly, we  treat the case $\Omega=\RR^N$, adapting the proof  in \cite[Section 7.1.3, Theorem 5]{evans} for the classical Laplace operator.
	\item The case of a general $\Omega$ is reduced to the previous one applying a cut-off argument.
\end{itemize}

\subsection{The $W^{2s,2}$-regularity on $\RR^N$}
In this Section, we prove the $W^{2s,2}$-regularity result in the case where $\Omega$ is the whole space $\RR^N$. We will adapt the proof presented in \cite[Section 7.1.3, Theorem 5]{evans} for the local case.

\begin{theorem}\label{reg-thm-r}
Assume $f\in L^2(\RR^N\times (0, T))$ and let $u\in L^2((0,T);W^{s,2}(\RR^N))\cap C([0,T];L^2(\RR^N))$ with $u_t\in L^2((0,T);W^{-s,2}(\RR^N))$ be the unique finite energy solution of the system
\begin{align}\label{DPR}
	\begin{cases}
	u_t+\fl{s}{u}=f\;\;&\mbox{ in }\RR^N\times (0,T),
		\\
		u(\cdot,0)\equiv 0\;&\mbox{ on } \RR^N.
	\end{cases}
\end{align}
Then 
\begin{align*}
u\in L^2((0,T);W^{2s,2}(\RR^N))\cap L^{\infty}((0,T);W^{s,2}(\RR^N)),\;\; u_t\in L^2(\RR^N\times (0, T))
\end{align*}
\end{theorem}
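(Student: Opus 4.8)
The plan is to mimic the energy method of \cite[Section 7.1.3, Theorem 5]{evans}, using $u_t$ as a test function, and then to bootstrap the spatial regularity via the stationary result already at our disposal. Since the problem is posed on all of $\RR^N$, the natural way to make the formal computation rigorous is through an approximation procedure: first I would construct approximate solutions $u_m$ by smoothing or truncating the datum $f$ and solving the regularized problem --- for instance on the Fourier side, where \eqref{DPR} decouples into the family of ODEs $\partial_t\widehat{u}(\xi,t)+|\xi|^{2s}\widehat{u}(\xi,t)=\widehat{f}(\xi,t)$ with $\widehat{u}(\xi,0)=0$ --- so that each $u_m$ is smooth enough in time to be legitimately tested against its own time derivative. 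The a priori estimates obtained below are uniform in $m$, and a standard passage to the limit, together with the uniqueness of the finite energy solution from the cited existence theory, transfers them to $u$.

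The core estimate comes from testing the equation with $u_t$. Denote by
\[
\mathcal{E}(v,w):=\frac{C_{N,s}}{2}\int_{\RR^N}\int_{\RR^N}\frac{(v(x)-v(y))(w(x)-w(y))}{|x-y|^{N+2s}}\,dxdy
\]
the quadratic form associated with $(-\Delta)^s$. For the approximations the weak formulation yields, for a.e. $t$,
\[
\|u_t(t)\|_{L^2(\RR^N)}^2+\mathcal{E}(u(t),u_t(t))=\int_{\RR^N}f(t)\,u_t(t)\,dx .
\]
The key observation is the identity $\mathcal{E}(u,u_t)=\tfrac{1}{2}\tfrac{d}{dt}\mathcal{E}(u,u)$, so that after Cauchy--Schwarz and Young on the right-hand side, and integration from $0$ to $t$ using $u(\cdot,0)=0$, one gets
\[
\int_0^t\|u_t\|_{L^2(\RR^N)}^2\,d\tau+\mathcal{E}(u(t),u(t))\le \|f\|_{L^2(\RR^N\times(0,T))}^2 .
\]
Since $\mathcal{E}(u,u)$ is a constant multiple of the squared Gagliardo seminorm, this gives at once $u_t\in L^2(\RR^N\times(0,T))$ and, after combining with the basic energy identity obtained by testing with $u$ itself (which via Gr\"onwall controls $\|u(t)\|_{L^2(\RR^N)}$ uniformly in $t$), the bound $u\in L^{\infty}((0,T);W^{s,2}(\RR^N))$.

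To conclude I would upgrade the spatial regularity from $W^{s,2}$ to $W^{2s,2}$ using Theorem \ref{re-r-N}. For a.e. $t\in(0,T)$ the function $u(\cdot,t)$ solves the stationary equation $(-\Delta)^s u(\cdot,t)=f(\cdot,t)-u_t(\cdot,t)$ on $\RR^N$, whose right-hand side now lies in $L^2(\RR^N)$ by the previous step. Applying Theorem \ref{re-r-N} with $p=2$ (case (3)) gives $u(\cdot,t)\in W^{2s,2}(\RR^N)$ with a norm controlled by $\|f(\cdot,t)-u_t(\cdot,t)\|_{L^2(\RR^N)}+\|u(\cdot,t)\|_{L^2(\RR^N)}$; squaring and integrating in $t$ yields $u\in L^2((0,T);W^{2s,2}(\RR^N))$, completing the proof.

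I expect the main obstacle to be precisely the rigorous justification of testing with $u_t$: the finite energy solution only carries $u_t\in L^2((0,T);W^{-s,2}(\RR^N))$, so $u_t$ is not an admissible test function in the weak formulation and the identity $\mathcal{E}(u,u_t)=\tfrac12\tfrac{d}{dt}\mathcal{E}(u,u)$ cannot be invoked directly. It must instead be established at the level of the regularized problems and then preserved in the limit through the uniform bounds above and the weak lower semicontinuity of $\mathcal{E}$. The remaining manipulations --- the Young inequality step, the Gr\"onwall argument for $\|u(t)\|_{L^2}$, and the elliptic bootstrap --- are routine.
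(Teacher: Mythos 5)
Your proposal is correct and follows essentially the same route as the paper: an Evans-style energy estimate obtained by testing the equation with $u_t$, the identity $B[u,u_t]=\tfrac{1}{2}\tfrac{d}{dt}B[u,u]$ combined with Young's inequality and integration in time, and then the elliptic bootstrap via Theorem \ref{re-r-N} with $p=2$ applied to $(-\Delta)^s u(\cdot,t)=f(\cdot,t)-u_t(\cdot,t)\in L^2(\RR^N)$ for a.e.\ $t$. The only deviations are cosmetic: the paper substitutes $v=ue^{-t}$ so that the mass term makes $B[v,v]+(v,v)$ control the full $W^{s,2}$-norm, where you instead bound $\|u(t)\|_{L^2}$ separately by testing with $u$ and Gr\"onwall, and you explicitly flag (and repair, via Fourier-side regularization) the fact that $u_t\in L^2((0,T);W^{-s,2}(\RR^N))$ is not an admissible test function, a point the paper's computation treats only formally.
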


\begin{proof}
First of all, we notice that the function $v:=ue^{-t}$ solves the system
\begin{align}\label{DPRV}
	\begin{cases}
		v_t+\fl{s}{v}+v=g & \mbox{ in } \RR^N\times [0,T],
		\\
		v(\cdot,0)\equiv 0 & \mbox{ on } \RR^N,
	\end{cases}
\end{align}
with $g:=fe^{-t}\in L^2(\RR^N\times (0, T))$. Now, multiplying \eqref{DPRV} by $v_t$ and integrating by parts over $\RR^N$ we obtain that
\begin{align*}
	(v_t,v_t) + B[v,v_t] + (v,v_t) = (g,v_t),
\end{align*}
where $(\cdot,\cdot)$ is the classical scalar product on $L^2(\RR^N)$, while with $B[\cdot,\cdot]$ we indicated the bilinear form
\begin{align*}
	B[\phi,\psi]:=\frac{C_{N,s}}{2}\int_{\RR^N}\int_{\RR^N}\frac{(\phi(x)-\phi(y))(\psi(x)-\psi(y))}{|x-y|^{N+2s}}\,dxdy.
\end{align*}
Moreover, we observe that 
\begin{align*}
	B[v,v_t] = \frac{1}{2}\frac{d}{dt}B[v,v]\;\mbox{ and }\; (v,v_t)=\frac{1}{2}\frac{d}{dt}(v,v).
\end{align*}
Hence, using Young's inequality we have that, for every $\varepsilon>0$,
\begin{align*}
	\norm{v_t}{L^2(\RR^N)}^2 + \frac{1}{2}\frac{d}{dt}\Big(B[v,v] + (v,v)\Big) = (g,v_t)\leq \frac{C}{\varepsilon}\norm{g}{L^2(\RR^N)}^2 + \varepsilon\norm{v_t}{L^2(\RR^N)}^2.
\end{align*}
Choosing $\varepsilon\le 1$ and integrating in time we find that
\begin{align*}
	\int_0^T\norm{v_t}{L^2(\RR^N)}^2\,dt + \sup_{t\in[0,T]}\Big(B[v(t),v(t)] + (v(t),v(t))\Big) \leq C\int_0^T\norm{g}{L^2(\RR^N)}^2\,dt,
\end{align*}
which implies that
\begin{align*}
	\norm{v_t}{L^2(\RR^N \times (0, T))}^2 + \norm{v}{L^{\infty}((0,T),W^{s,2}(\RR^N))} \leq C\norm{g}{L^2(\RR^N \times (0, T))}^2.
\end{align*}
Therefore, 
\begin{align*}
	v\in L^{\infty}((0,T);W^{s,2}(\RR^N)),\;\;\; v_t\in L^2(\RR^N\times (0, T))
\end{align*}
and, by definition,  $u$ has the same regularity too. Finally, the $W^{2s,2}$ regularity for $u$ in the space variable is obtained in the following way. From \eqref{DPR} we have that $\fl{s}{u} = f-u_t\in L^2(\RR^N\times (0, T))$. Hence, a. e. $t\in(0,T)$, we have that $\fl{s}{u}(\cdot,t) = h(\cdot,t)\in L^2(\RR^N)$ and, applying the regularity results for the elliptic case (see Theorem \ref{re-r-N}) we get that $u(\cdot,t)\in W^{2s,2}(\RR^N)$ a. e. $t\in (0,T)$. Furthermore $u\in L^2((0,T);W^{2s,2}(\RR^N))$ and the proof is finished.
\end{proof}

\subsection{The $W^{2s,2}_{\textrm{loc}}$-regularity in $\Omega$}

\begin{proof}[\bf Proof of Theorem \ref{reg-thm-2}]

As we have mentioned above, our strategy is based on a cut-off argument that will allow us to show that solutions of the fractional parabolic problem in $\Omega$, after cut-off, are solutions of a problem on the whole space $\RR^N$, for which Theorem \ref{reg-thm-r} holds. 

Let $f\in L^2(\Omega\times (0,T))$ and $u\in L^2((0,T);W_0^{s,2}(\bOm))\cap C([0,T];L^2(\Omega))$ with $u_t\in L^2((0,T);W^{-s,2}(\bOm))$ be the unique finite energy solution to the system \eqref{DP}.  

Let $\omega$ and $\eta\in\mathcal D(\omega)$ be respectively the set and the cut-off function constructed in \eqref{eta}. We consider the function $v:=u\eta$ and we write the equation satisfied by $v$. Recall from \eqref{for-del-prod} that the fractional Laplacian of $v$ is given by 
\begin{align*}
\fl{s}{v}=\fl{s}{(u\eta)}=u\fl{s}{\eta}+\eta\fl{s}{u}-I_s(u,\eta),
\end{align*}
where the remainder term $I_s$ has been defined in \eqref{Is-formula}. Then, $v$ is a solution to the following problem on $\RR^N$:
\begin{align}
	\begin{cases}
		v_t+\fl{s}{v}=F\ & \mbox{ in }\RR^N\times (0,T),
		\\
		v(\cdot,0)\equiv 0 & \mbox{ on } \RR^N,
	\end{cases}
\end{align}
with $F=\eta f + u(-\Delta)^s\eta - I_s(u,\eta)$. 

Following the proof of \cite[Theorem 1.2]{fl_reg}, we can show that $F\in L^2(\RR^N\times (0,T))$. Hence, from Theorem \ref{reg-thm-r} we obtain that 
\begin{align*}
	v\in L^2((0,T);W^{2s,2}(\RR^N))\cap L^{\infty}((0,T);W^{s,2}(\RR^N)),\;\;\ v_t\in L^2(\RR^N\times (0,T)).
\end{align*}

This implies that $u\in L^2((0,T);W^{2s,2}_{\textrm{loc}}(\Omega))\cap L^{\infty}((0,T);W_0^{s,2}(\bOm))$ and $u_t\in L^2(\Omega\times (0,T))$. The proof is finished.
\end{proof}

\section{Proof of Theorem \ref{reg-thm-p}}\label{Lp-reg}

In this section, we prove the local regularity for the solutions to the parabolic problem \eqref{DP}, corresponding to a right hand side $f\in L^p(\Omega\times (0,T))$, with $1<p<\infty$. 

First of all, notice that the following developments  also apply to the case $p=2$. This special case has already been treated in the previous section, and there the proof of our local regularity Theorem \ref{reg-thm-2} has been developed taking advantage of the Hilbert structure of the spaces $L^2(\Omega)$ and $L^2(\Omega\times (0,T))$. 

Clearly that strategy cannot be extended to the general $L^p$ setting, and we have to adopt a different approach. This approach relies on an abstract result due to Lamberton \cite{lamberton}. In particular, the proof of Theorem \ref{reg-thm-p} will be a direct consequence of \cite[Theorem 1]{lamberton}. For the sake of completeness, we recall its statement here.

\begin{theorem}\label{thm-lamberton}
Let $(\Omega,\Sigma,m)$ be a measure space and let $A$ be the generator of a strongly continuous semigroup of linear operators $(\mathbb{T}_t)_{t\geq 0}$ on $L^2(\Omega,\Sigma,m)$ satisfying the following hypothesis:
\begin{enumerate}
	\item The semigroup $(\mathbb{T}_t)_{t\geq 0}$ is analytic and bounded on $L^2(\Omega,\Sigma,m)$.
	\item For every $p\in [1,\infty]$ and  $\phi\in L^p(\Omega)\cap L^2(\Omega)$ we have the estimate
\begin{align*}
\norm{\mathbb{T}_t\phi}{L^p(\Omega)}\leq \norm{\phi}{L^p(\Omega)}, \mbox{ for all }\; t\geq 0.
\end{align*}
\end{enumerate} 
Let $p\in(1,\infty)$. If $f\in L^p(\Omega\times (0,T))$, then the system
\begin{align*}
	\begin{cases}
		u_t-Au=f,& t\in (0,T)
		\\
		u(0)=0
	\end{cases}
\end{align*}
admits a solution $u\in C([0,T];L^p(\Omega))$, such that $u_t$, $Au\in L^p(\Omega\times (0,T))$.
\end{theorem}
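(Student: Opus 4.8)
The plan is to prove the abstract maximal regularity statement in three stages: first the base case $p=2$ using the Hilbert-space structure, then the propagation of analyticity to the whole $L^p$-scale, and finally the upgrade of maximal regularity from $p=2$ to a general $p\in(1,\infty)$, the latter being the step where the contractivity hypothesis (2) is decisive. Before any of this I would reduce the statement to a single a priori estimate: with zero initial datum the candidate solution is given by the Duhamel formula $u(t)=\int_0^t \mathbb{T}_{t-\tau}f(\tau)\,d\tau$, so that formally $u_t=f+Au$. It therefore suffices to prove that the maximal regularity operator $\mathcal{R}\colon f\mapsto Au$ is bounded on $L^p(\Omega\times(0,T))$; once this is known, $u_t\in L^p(\Omega\times(0,T))$ follows from $u_t=f+Au$, while $u\in C([0,T];L^p(\Omega))$ follows from the standard continuity of the convolution of an $L^p$-datum against a strongly continuous, bounded semigroup.

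For the base case $p=2$ I would invoke de Simon's theorem: hypothesis (1) asserts that $(\mathbb{T}_t)_{t\ge 0}$ is a bounded analytic semigroup on the Hilbert space $L^2(\Omega,\Sigma,m)$, and on a Hilbert space boundedness and analyticity of the semigroup are equivalent to maximal $L^2$-regularity. This gives $\mathcal{R}$ bounded on $L^2(\Omega\times(0,T))$ using hypothesis (1) alone.

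Next I would extend the semigroup to the whole scale. Viewing $z\mapsto \mathbb{T}_z$ as an analytic family of operators on the sector of analyticity and interpolating its analyticity on $L^2$ against the contractivity on $L^1$ and $L^\infty$ furnished by hypothesis (2), Stein's interpolation theorem for analytic families yields that $A$ generates a bounded analytic semigroup on $L^p(\Omega)$ for every $p\in(1,\infty)$, with a sector angle $\omega_p<\pi/2$. In particular one obtains the analyticity estimates $\norm{A\mathbb{T}_t}{\mathcal{L}(L^p)}\le C/t$ and the corresponding bound for $\tfrac{d}{dt}\bigl(A\mathbb{T}_t\bigr)$ on each $L^p$.

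The main step, and the principal obstacle, is to upgrade maximal regularity from $p=2$ to general $p$. The operator $\mathcal{R}$ is convolution in time against the operator-valued kernel $K(t)=A\mathbb{T}_t\,\mathbf{1}_{\{t>0\}}$, which is singular at $t=0$; the analyticity estimates of the previous step show that $K$ satisfies a Hörmander-type integral condition in the operator norm of $L^p(\Omega)$, so that, the $L^2$-boundedness being already established, vector-valued Calderón–Zygmund theory would extend $\mathcal{R}$ to $L^p((0,T);L^p(\Omega))=L^p(\Omega\times(0,T))$ for $2\le p<\infty$, with a duality argument covering $1<p<2$. The delicate point is that the kernel is genuinely operator-valued, so scalar Calderón–Zygmund theory does not apply directly; this is exactly where the contractivity hypothesis (2) enters beyond merely producing analyticity on $L^p$. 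Concretely, contractivity allows one to dilate $(\mathbb{T}_t)$ to a group of isometries on a larger $L^p$-space and to transfer the operator-valued singular integral to a scalar Fourier multiplier on $\RR$ via the Coifman–Weiss transference principle, or, equivalently, to conclude that $A$ admits bounded imaginary powers of angle $<\pi/2$ and to invoke the Dore–Venni theorem, using the UMD property of $L^p(\Omega)$ for $1<p<\infty$. Controlling this operator-valued kernel uniformly in $L^p$ is the heart of the argument; the remaining assertions then follow from the reduction step.
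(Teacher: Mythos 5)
You should first be aware that the paper does not prove this statement at all: Theorem \ref{thm-lamberton} is recalled verbatim from Lamberton's article \cite[Theorem 1]{lamberton} ``for the sake of completeness,'' and is then used as a black box in the proof of Theorem \ref{reg-thm-p}. So the only meaningful comparison is with Lamberton's original argument, and against that standard your proposal, while architecturally sensible, has a genuine gap precisely at the step you yourself call ``the heart of the argument.'' The sound parts are: the Duhamel reduction, the base case $p=2$ via de Simon's theorem, the Stein interpolation of the analytic family $z\mapsto \mathbb{T}_z$ between the $L^2$ sector bounds and the $L^1$/$L^\infty$ contractivity to get bounded analyticity on every $L^p$ with $\norm{A\mathbb{T}_t}{\mathcal{L}(L^p)}\le C/t$, and the duality reduction of $1<p<2$ to $p'>2$ (the adjoint semigroup satisfies the same hypotheses). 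All of this is correct.

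The gap is twofold. First, the vector-valued Calder\'on--Zygmund step does not close as stated: the Benedek--Calder\'on--Panzone theorem extrapolates the \emph{time} exponent for a \emph{fixed} target Banach space $X$, so with the kernel $K(t)=A\mathbb{T}_t$ viewed in $\mathcal{L}(L^p(\Omega))$ you need a seed bound on $L^{q_0}((0,T);L^p(\Omega))$ for some $q_0\in(1,\infty)$ --- but de Simon only supplies the seed for $X=L^2(\Omega)$. What CZ theory plus duality actually yields is maximal regularity on $L^q((0,T);L^2(\Omega))$ for all $q\in(1,\infty)$; it never reaches the mixed-exponent target $L^p((0,T);L^p(\Omega))$ with $p\neq 2$, and this mismatch is exactly the difficulty of the theorem, not a technicality. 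Second, the two rescues you propose are not available under hypotheses (1)--(2) as stated. Isometric dilation of $L^p$-contraction semigroups (Akcoglu--Sucheston, Fendler) and the Coifman--Weiss/Cowling transference machinery require \emph{positivity} (or subpositivity) of the semigroup, which hypothesis (2) does not give; and ``$A$ admits bounded imaginary powers of angle $<\pi/2$, then Dore--Venni on UMD'' is not a known consequence of (1)--(2) that can simply be invoked --- establishing such BIP bounds (equivalently, $R$-analyticity \`a la Weis) under these hypotheses is essentially equivalent to the theorem itself. Note in this respect that analyticity plus contractivity on a \emph{single} $L^p$ is known to be insufficient for maximal regularity, so any correct proof must exploit contractivity on the \emph{whole} scale $[1,\infty]$ through an actual mechanism; your sketch correctly senses this but delegates it to cited machinery whose hypotheses are not met. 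Lamberton's own proof closes precisely this step by a direct singular-integral and interpolation analysis tailored to the full scale of contractivity, rather than by dilation or an appeal to BIP; as written, your proposal asserts the crux rather than proving it.
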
 

\begin{proof}[\bf Proof of Theorem \ref{reg-thm-p}]
First of all notice that the operator $A=-\fl{s}$ with domain
\begin{align}\label{dom-op}
	\mathcal{D}(A) = \Big\{u\in W_0^{s,2}(\bOm),\,\;\;\fl{s}{u}\in L^2(\Omega)\Big\} 
\end{align}
is the generator of a submarkovian strongly continuous semigroup $(\mathbb{T}_t)_{t\geq 0}$ which is also ultracontractive (see, e.g., \cite[Lemma 2.4]{fl_reg}). Let $f\in L^p(\Omega\times (0,T))$ and let $u$ be the corresponding weak solution to the system \eqref{DP}. Then, it follows from Theorem \ref{thm-lamberton} that $u_t,\fl{s}{u}\in L^p(\Omega\times (0,T))$. In particular we have that $\fl{s}{u}(\cdot,t)=(f-u_t)(\cdot,t)\in L^p(\Omega)$ a. e. $t\in (0,T)$ and, according to Theorem \ref{reg-p-ell}, this implies that
$u(\cdot,t)\in \mathscr{L}^p_{2s, {\textrm{loc}}}(\Omega)$  a. e. $t\in(0,T)$. Therefore, for all $t\in(0,T)$ we have the following results.
\begin{enumerate}
\item[(i)] If $1<p<2$ and $s\ne 1/2$, then $u(\cdot,t)\in B^{2s}_{p,2, \textrm{loc}}(\Omega)$, a. e. $t\in (0,T)$.
\item[(ii)] If $1<p<2$ and $s= 1/2$, then $u(\cdot,t)\in W^{2s,p}_{\textrm{loc}}(\Omega)=W^{1,p}_{\textrm{loc}}(\Omega)$, a. e. $t\in (0,T)$.
\item[(iii)] If $2\leq p<\infty$, then $u(\cdot,t)\in W^{2s,p}_{\textrm{loc}}(\Omega)$, a. e. $t\in (0,T)$.
\end{enumerate}
Furthermore:
\begin{enumerate}
\item If $1<p<2$ and $s\ne 1/2$, then $u\in L^p((0,T);B^{2s}_{p,2, \textrm{loc}}(\Omega))$.
\item If $1<p<2$ and $s= 1/2$, then $u\in L^p((0,T);W^{2s,p}_{\textrm{loc}}(\Omega))=L^p((0,T);W^{1,p}_{\textrm{loc}}(\Omega))$.
\item If $2\leq p<\infty$, then $u\in L^p((0,T);W^{2s,p}_{\textrm{loc}}(\Omega))$.
\end{enumerate}
The proof of the theorem is finished.
\end{proof}

\noindent We conclude this section with the following remark.

\begin{remark}
{\em Recall that we have said in the proof of Theorem \ref{reg-thm-p} that the operator $A=-\fl{s}$ with domain given by \eqref{dom-op} generates a strongly continuous submarkovian semigroup $(\mathbb{T}_t)_{t\geq 0}$ on $L^2(\Om)$ and the semigroup is analytic and  ultracontractive. This implies that the semigroup can be extended to contraction semigroups on $L^p(\Omega)$ for all $p\in [1,\infty]$ and each semigroup is strongly continuous if $p\in [1,\infty)$ and bounded analytic if $p\in (1,\infty)$. Let $A_p$ denote the generator of the semigroup on $L^p(\Omega)$ for $p\in [1,\infty]$ so that $A_2$ coincides with $A$. By Theorem \ref{thm-lamberton} if $1<p<\infty$  and $f\in L^p(\Omega\times (0,T))$, then the unique solution $u\in C([0,T];L^p(\Omega))$ of the system \eqref{DP} has the following regularity:
\begin{align*}
u\in L^p((0,T); D(A_p)).
\end{align*}
This trivially implies that $A_pu\in L^p(\Omega\times (0,T))$ and $u_t\in L^p(\Omega\times (0,T))$. Our contribution in the present paper was to show that  $D(A_p)\subset \mathscr{L}^{p}_{2s, {\rm loc}}(\Omega)$ for every $1<p<\infty$.
}
\end{remark}

\section{Open problems and perspectives}\label{open_pb}

In the present paper we proved that weak solutions to the parabolic problem for the fractional Laplacian, with a non-homogeneous right-hand side $f\in L^p(\Omega\times (0,T))$ ($1<p<\infty$) and zero initial datum, belong to $L^p((0,T);\mathscr{L}^p_{2s, {\textrm{loc}}}(\Omega))$.
The following comments are worth considering.

\begin{enumerate}
	\item A natural interesting extension of our result would be the analysis of the global maximal regularity in space for weak solutions to \eqref{DP}. The problem is delicate however. 

Indeed, already at the elliptic level, we know that even if $\Omega$ has a smooth boundary, then the global maximal regularity up to the boundary does not hold. To be more precise, assume that $\Omega$ has a smooth boundary, $f\in L^p(\Omega)$ ($1<p<\infty$) and let $u$ be the associated weak solution to the Dirichlet problem \eqref{DP-ell}. It is known that, if $p\ge 2$, then $u$ does not always belongs to $W^{2s,p}(\Omega)$ and, if $1<p<2$, then $u$ does not always belong to $B_{p,2}^{2s}(\Omega)$.
This shows that in general, the corresponding weak solution $u$ to the parabolic system \eqref{DP} does not always belong to $L^p((0,T);W^{2s,p}(\Omega))$ if $p\ge 2$ and does not always  belong to  $L^p((0,T);B_{p,2}^{2s}(\Omega))$ if $1<p< 2$. 
	
On the one hand, Theorem \ref{thm-lamberton} shows that $u\in L^p((0,T);D(A_p))$, that is, in particular $u(\cdot,t)\in D(A_p)$ for a.e. $t\in (0,T)$. On the other hand, according to the discussions given in \cite[Section 5]{fl_reg}, at least if $\Omega$ has a sufficiently smooth boundary, one has that $u(\cdot,t)=\rho^sv(\cdot,t)$ where $v(\cdot,t)$ is a regular function up to the boundary. Here, $\rho(x):=\textrm{dist}(x,\partial\Omega)$ for $x\in\Omega$. In addition one could expect that $\rho^{-s}u, \rho^{1-s}u\in L^p((0,T);L^p((0,T);W^{2s,p}(\Omega))$ if $2\le p<\infty$, and $\rho^{-s}u, \rho^{1-s}u\in L^p((0,T);L^p((0,T);B_{p,2}^{2s}(\Omega))$ if $1< p<2$. This constitutes an interesting open problem.  We refer to \cite[Section 5]{fl_reg} for a more complete discussion on related topics and the difficulties that it raises.
	
	\item It would be interesting to consider the case of a non-zero initial datum in equation \eqref{DP}. In the Hilbert space framework, i.e. when working in the $L^2(\Omega)$ setting, the strategy of Section \ref{L2-reg} can be extended to deal with initial data in $W_0^{s,2}(\bOm)$. To the best of our knowledge, the corresponding analogous result in the case $p\in (1,\infty)$, $p\ne 2$, is still unknown.
\end{enumerate}


\begin{thebibliography}{99}





\bibitem{fl_reg}
\newblock U.~Biccari, M.~Warma and E.~Zuazua.
\newblock\emph{Local elliptic regularity for the Dirichlet fractional Laplacian}.
\newblock Adv. Nonlinear Stud. \textbf{17} (2017), 387--409.

\bibitem{fl_reg_add}
\newblock U.~Biccari, M.~Warma and E.~Zuazua.
\newblock\emph{Addendum: Local elliptic regularity for the Dirichlet fractional Laplacian}.


\bibitem{brezis} 
\newblock H.~Brezis.
\newblock{Functional Analysis, Sobolev Spaces and Partial Differential Equations}. 
\newblock Springer Science \& Business Media, 2010.

\bibitem{evans}
\newblock L.C.~Evans.
\newblock Partial Differential Equations. 
\newblock American Mathematical Society, 2010. 

\bibitem{FLZ}
\newblock E. Fern\'andez-Cara, Q. L\"u and E. Zuazua.
\newblock \emph{Null controllability of linear heat and wave equations with nonlocal in space terms}.
\newblock SIAM J. Control Optim. \textbf{54} (2016),  2009--2019.

\bibitem{FR}
\newblock X.~Fern\'andez-Real and X.~Ros-Oton. 
\newblock\emph{Boundary regularity for the fractional heat equation}. 
\newblock Rev. R. Acad. Cienc. Exactas F\'is. Nat. Ser. A Math. RACSAM \textbf{110} (2016), 49--64.

\bibitem{Gris-abs} 
\newblock P.~Grisvard.
\newblock{\'Equations diff\'erentielles abstraites}. 
\newblock Ann. Sci. \'Ecole Norm. Sup. \textbf{2} (1969), 311--395.

\bibitem{grubb} 
\newblock G.~Grubb,
\newblock\emph{Fractional Laplacians on domains, a development of H\"ormander’s theory of $\mu$-transmission pseudodifferential operators}. 
\newblock Adv. Math. \textbf{268} (2015), 478--528.

\bibitem{KS}
\newblock M.~Kassmann and R.W.~Schwab. 
\newblock\emph{Regularity results for nonlocal parabolic equations}. 
\newblock Riv. Mat. Univ. Parma., \textbf{(1) 5} (2014), 183--212.

\bibitem{LSU} 
\newblock O.A.~Ladyzhenskaya, V.A.~Solonnikov and N.N.~Uraltseva.
\newblock{Linear and Quasi-linear Equations of Parabolic Type}. 
\newblock American Mathematical Society, 1968.

\bibitem{lamberton}
\newblock D.~Lamberton.
\newblock\emph{Equations d'\'evolution lin\'eaires associ\'ees \`a des semi-groupes de contractions dans les espaces $L^p$}.
\newblock J. Funct. Anal. \textbf{72} (1987), 252--262.

\bibitem{LPPS}
\newblock T.~Leonori, I.~Peral, A.~Primo and F.~Soria.
\newblock\emph{Basic estimates for solutions of a class of nonlocal elliptic and parabolic equations}. 
\newblock Discrete Contin. Dyn. Syst. \textbf{35} (2015), 6031--6068.

\bibitem{MZ}
\newblock S.~Micu and E.~Zuazua. 
\newblock\emph{On the controllability of a fractional order parabolic equation}. 
\newblock SIAM J. Control Optim. \textbf{44} (2006), 1950--1972.

\bibitem{miller}
\newblock L.~Miller. 
\newblock\emph{On the controllability of anomalous diffusions generated by the fractional Laplacian}. 
\newblock Math. Control Signals Systems. \textbf{18} (2006), 260--271.


\bibitem{ROS}
\newblock X.~Ros-Oton and J.~Serra.
\newblock\emph{The Dirichlet problem for the fractional Laplacian: regularity up to the boundary}.
\newblock J. Math. Pures Appl. \textbf{101} (2014), 275--302.

\bibitem{STEIN}
\newblock E.~Stein 
\newblock\emph{Singular Integrals and Differentiability Properties of Functions}.
\newblock Princeton Mathematical Series, 1970.

\bibitem{taylor}
\newblock M.E.~Taylor.
\newblock\emph{Pseudodifferential operators}.
\newblock Princeton Mathematical Series, Vol. 4, 1981.

\bibitem{TRIEB} 
\newblock H.~Triebel.
\newblock{Theory of Function Spaces II}. 
\newblock Monographs in Mathematics, \textbf{84}. Birkh\"auser Verlag, Basel, Boston, Berlin, 1992.

\end{thebibliography}
\end{document}